\documentclass[11pt,a4paper]{article}
\usepackage[utf8]{inputenc}
\usepackage[T1]{fontenc}
\usepackage{geometry}
\usepackage{amsmath, amssymb, amsthm}
\usepackage{graphicx}
\usepackage{hyperref}
\usepackage{xcolor}
\usepackage{mathrsfs}
\usepackage{enumitem}

\usepackage{tikz}
\usetikzlibrary{arrows.meta,decorations.markings,decorations.pathreplacing,positioning,fit,backgrounds,matrix,calc}
\usepackage{blkarray}

\usepackage{mathtools}
\usepackage{comment}

\usepackage{fontspec}
\usepackage{minted}
\usepackage[most]{tcolorbox}
\tcbuselibrary{minted,skins,breakable}
\definecolor{juliagreen}{HTML}{389826}
\setminted{
  fontsize=\footnotesize,
  autogobble,
  breaklines=true,
  breakindent=8pt,
  breaksymbolleft={\tiny\textcolor{black!55}{$\hookrightarrow$}},
  breaksymbolright={},
  style=friendly
}

\newtcblisting{juliacode}{
  listing only,
  listing engine=minted,
  minted language=julia,
  enhanced jigsaw,
  colback=white,
  colframe=black,
  boxrule=0pt,
  toprule=0.4pt, bottomrule=0.4pt, leftrule=0pt, rightrule=0pt,
  arc=0pt,
  left=1mm, right=1mm, top=0.6mm, bottom=0.6mm,
  breakable
}

\newtcblisting{juliarepl}{
  listing only,
  listing engine=minted,
  minted language=jlcon,
  enhanced jigsaw,
  colback=white,
  colframe=juliagreen,
  boxrule=0pt,
  toprule=0pt, bottomrule=0pt, rightrule=0pt,
  leftrule=2pt,
  arc=0pt,
  left=2mm, right=1mm, top=0.6mm, bottom=0.6mm,
  breakable
}

\newcommand{\codepair}{\vspace*{-0.5\baselineskip}}

\newtcolorbox{juliasplit}{
  sidebyside,
  lefthand width=0.5\linewidth,
  sidebyside gap=3mm,
  sidebyside align=top seam,
  enhanced jigsaw,
  colback=white,
  colframe=black,
  boxrule=0pt,
  toprule=0.4pt, bottomrule=0.4pt, leftrule=0pt, rightrule=0pt,
  arc=0pt,
  left=1mm, right=1mm, top=0.6mm, bottom=0.6mm,
  segmentation style={solid, line width=0.4pt, black}
}

\newtheorem{theorem}{Theorem}[section]

\newtheorem{remark}[theorem]{Remark}

\newtcolorbox{practicalbox}{
  enhanced jigsaw,
  colback=white,
  colframe=black,
  boxrule=0pt,
  leftrule=1pt,
  toprule=0pt, bottomrule=0pt, rightrule=0pt,
  arc=0pt,
  left=10pt, right=0pt, top=4pt, bottom=4pt,
  breakable
}
\theoremstyle{definition}
\newtheorem{ppropinner}[theorem]{Proposition}
\newtheorem{pleminner}[theorem]{Lemma}
\theoremstyle{plain}
\NewDocumentEnvironment{pproposition}{o}
  {\begin{practicalbox}\IfValueTF{#1}{\begin{ppropinner}[#1]}{\begin{ppropinner}}}
  {\end{ppropinner}\end{practicalbox}}
\NewDocumentEnvironment{plemma}{o}
  {\begin{practicalbox}\IfValueTF{#1}{\begin{pleminner}[#1]}{\begin{pleminner}}}
  {\end{pleminner}\end{practicalbox}}

\newcommand{\R}{\mathbb{R}}
\newcommand{\C}{\mathbb{C}}
\newcommand{\Z}{\mathbb{Z}}
\newcommand{\N}{\mathbb{N}}

\newcommand{\bx}{\bar{x}}

\newcommand{\tx}{x^\star}

\newcommand{\norm}[1]{\left\|#1\right\|}

\newcommand{\bydef}{\stackrel{\textnormal{\tiny def}}{=}}

\title{Computer-Assisted Proofs in Dynamical Systems:\\
A Case Study of a Heteroclinic Orbit in the Shimizu--Morioka System}
\author{
Olivier H\'{e}not
\thanks
{National Taiwan University, Department of Mathematics, No. 1 Sec. 4 Roosevelt Rd., 10617 Taipei, Taiwan. {\tt olivierhenot@ntu.edu.tw}.}
\and
Akitoshi Takayasu
\thanks
{University of Tsukuba, Institute of Systems and Information Engineering, 1-1-1 Tennodai, Tsukuba, Ibaraki 305-8573, Japan. {\tt takitoshi@risk.tsukuba.ac.jp}.}
}
\date{}

\begin{document}

\maketitle

\begin{abstract}
The radii polynomial approach is an a posteriori validation method based on the contraction of a quasi-Newton operator.
We apply this strategy to give a computer-assisted proof of a transverse heteroclinic orbit in the Shimizu--Morioka system, validating the equilibria and eigenpairs, the local invariant manifolds via the parameterization method, and the connecting orbit via a boundary-value problem.
For each subproblem we present a four-step procedure: $(i)$ zero-finding formulation, $(ii)$ approximate zero, $(iii)$ approximate inverse, and $(iv)$ bound estimates.
This highlights the unifying structure behind the a posteriori validation method.
Alongside the analysis, we include code snippets implemented in Julia \cite{Julia-2017} using the \texttt{RadiiPolynomial} \cite{RadiiPolynomial.jl} library.
\end{abstract}



\section{Introduction}

The Shimizu--Morioka system~\cite{ShimizuMorioka}
\begin{equation}\label{eq:shimizu_morioka}
\dot{x} = y, \qquad \dot{y} = x - a y - x z, \qquad
\dot{z} = -b z + x^2,
\end{equation}
where $a, b$ are real parameters, illustrates how complex dynamics can arise even in simple models.
As with the Lorenz system \cite{Lorenz}, system~\eqref{eq:shimizu_morioka} exhibits a \emph{butterfly} shaped strange attractor, where trajectories visit the two \emph{wings} $A$ and $B$ in any prescribed symbolic sequence.
The invariant manifolds, attached to equilibria or periodic orbits, are central to the global picture of the dynamics.
These objects, however, are notoriously difficult to obtain analytically, and in general come with limited (if any) quantitative information.
This makes the study of invariant manifolds and their intersection a particularly compelling instance of a fundamental problem in dynamical systems that benefits from the assistance of the computer.

\begin{figure}
    \centering
    \includegraphics[width=\textwidth]{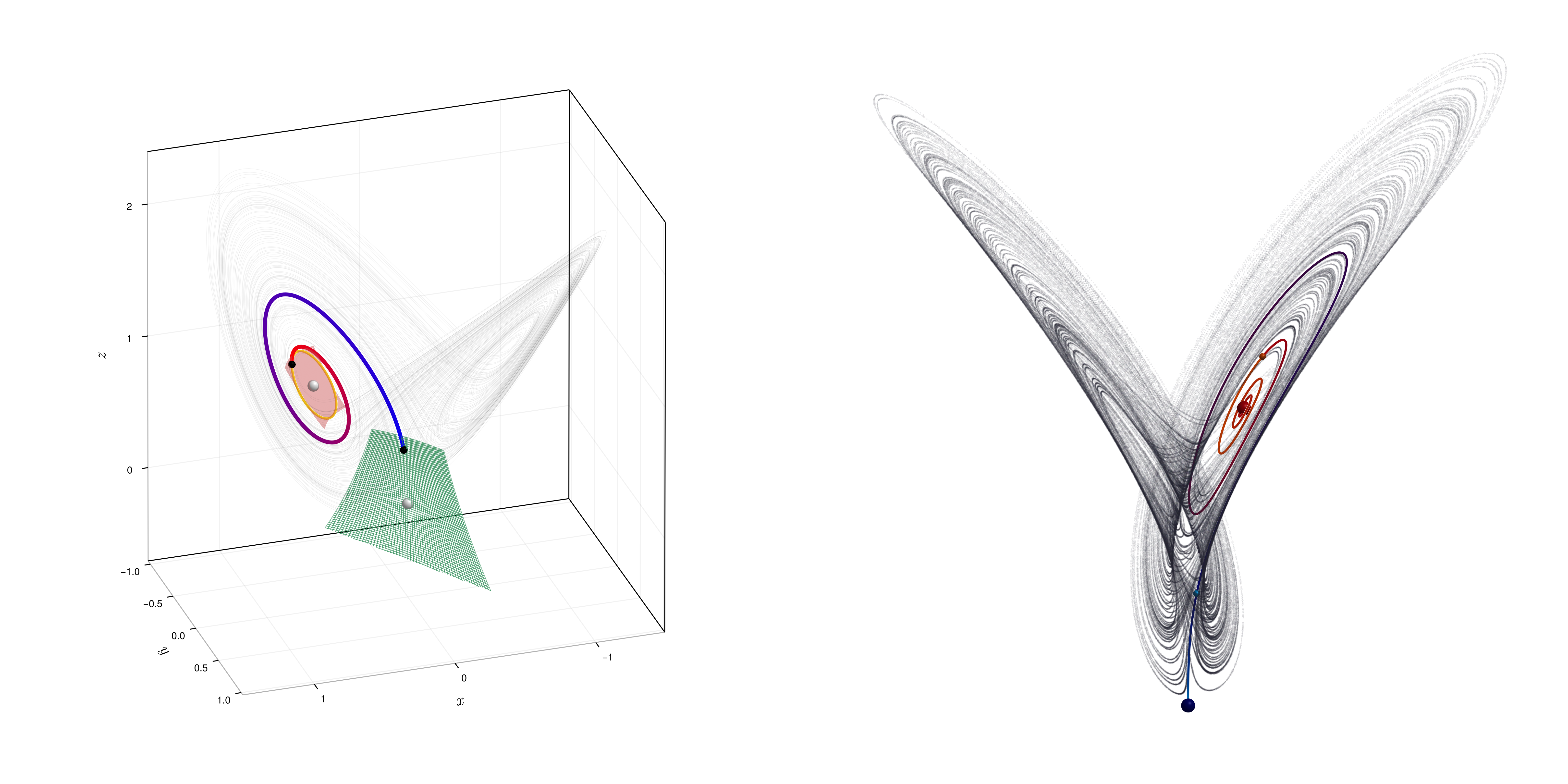}
    \caption{Transverse heteroclinic orbit connecting two saddle equilibria of the Shimizu--Morioka system, inside the strange attractor (shown in light grey).}
    \label{fig:het}
\end{figure}

In this paper, we use the existence proof of a transverse heteroclinic orbit connecting two saddle equilibria of the Shimizu--Morioka system (see Figure~\ref{fig:het}) to guide the reader through the key ideas and techniques of a posteriori validation based on the radii polynomial approach.
While the underlying theory is well established (see, e.g., \cite{LessardMirelesJamesReinhardt2014}), our exposition pursues a more structural aim.
We highlight that each subproblem of the proof (the validation of equilibria and eigenpairs, of local invariant manifolds, and of the connecting orbit) fits into the same four-step procedure, thereby revealing the common structure of the computer-assisted proof framework in both finite- and infinite-dimensional settings, and isolating what is problem specific from what belongs to the framework.
In parallel, we provide code snippets accompanying the mathematical analysis to illustrate explicitly how it translates into concrete practical implementations.
As much as possible, we keep the discussion elementary so that the article remains approachable for newcomers; more experienced readers will find opportunities to improve the code (performance, memory management) and to sharpen the analytical bounds.

The difficulty of describing solutions to nonlinear differential equations has long motivated the development of practical theorems and algorithms to obtain approximate solutions.
The fact that these efforts predate the digital computer by centuries \cite{Goldstine} highlights the enduring need for reliable computational methods.
The influence of the Japanese school of applied mathematics on the development of computer-assisted proofs must be acknowledged.
Having embraced these methods early on, the Japanese school pioneered the field in many ways \cite{Sunaga1958, Urabe1965, Nakao1988, Yamamoto1998}, and continues to play a leading role today \cite{NakaoYamamoto1998, Oishi1995, OishiRump2002, Oishi2, Rump1999, Rump2010, Nakao-book, Watanabe}.
The research and applications of computer-assisted proofs are rapidly growing \cite{Tucker2002, RadiiPolynomial, WrightConjecture, JonesConjecture, vdBergLessard2018, GomezSerrano2019, SekineNakaoOishi2020, TakayasuLessard2022, CalGarHenLesMir24, TakayasuPDE}.

The article is organized as follows.
Section~\ref{sec:setup} states the a posteriori validation framework used throughout the paper.
Section~\ref{sec:eq_eig} validates equilibria of the Shimizu--Morioka system and their stable and unstable eigenpairs.
Section~\ref{sec:manifold} validates Taylor series expansions of local parameterizations of the invariant manifolds via the parameterization method.
Section~\ref{sec:heteroclinic} validates a Chebyshev series expansion of a segment of the transverse heteroclinic orbit connecting the two equilibria.
The final section concludes with comments on the scope of the approach and possible extensions.

\paragraph{Software resources.}
The code accompanying this article, available at~\cite{Code}, is written in Julia~\cite{Julia-2017}, using the \texttt{RadiiPolynomial}~\cite{RadiiPolynomial.jl} and \texttt{IntervalArithmetic}~\cite{IntervalArithmetic.jl} software libraries.
We also made use of the suite of ODE solvers from \texttt{DifferentialEquations} \cite{DifferentialEquations.jl}.
The figure in this article was made with \texttt{Makie} \cite{Makie.jl}.


\subsection{The radii polynomial approach}\label{sec:setup}

Many problems in dynamical systems are naturally formulated in the form of $F(x) = 0$.
We begin this article by presenting a strategy to prove the existence of an isolated zero of $F$.

\noindent
Let
\begin{enumerate}[label=(\roman*)]
\item $\bx$ be a numerical approximation of the zero (i.e., $F(\bx) \approx 0$ numerically), and
\item $A$ a numerical approximation of the inverse of $DF(\bx)$ (i.e., $A \approx DF(\bx)^{-1}$ numerically).
\end{enumerate}
The following theorem gives sufficient conditions to prove the contraction of the quasi-Newton operator $x \mapsto x - A F(x)$ in an \emph{explicit closed ball} of $\bx$.
We denote by $\mathscr{B}(X)$ the space of bounded linear operators from $X$ to itself, while $B(\bx, r)$ denotes the closed ball in $X$ centered at $\bx$ with radius $r$.
By convention, if $r = \infty$, then $B(\bx, r) = X$.

\begin{theorem}[Radii Polynomial Theorem]\label{thm:rpa}
Let $X$ be a Banach space, $\bx \in X$, $F : X \to X$ a $C^1$ map, and $A : X \to X$ an injective linear map.
Fix $R \in [0, \infty]$, and let $Y, Z = Z(R) \ge 0$ be two positive constants satisfying
%
\begin{equation}
\norm{A F(\bx)}_X \le Y, \qquad
\sup_{x \in B(\bx, R)} \norm{I - A DF(x)}_{\mathscr{B}(X)} \le Z.
\end{equation}
If $Z < 1$,
%
%
then, for any $r \ge 0$ such that
\begin{equation}
\frac{Y}{1 - Z} \le r \le R,
\end{equation}
the fixed-point operator $x \mapsto x - A F(x)$ is a contraction on $B(\bx, r)$, so that there exists a unique zero $\tx \in B(\bx, r)$ of $F$.
\end{theorem}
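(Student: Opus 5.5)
The plan is to apply the Banach fixed-point theorem to $T(x) \bydef x - A F(x)$ on the closed ball $B(\bx, r)$. This ball is a complete metric space, being closed in the Banach space $X$. Two properties are needed: $T$ maps $B(\bx,r)$ into itself, and $T$ is a contraction there with constant $Z$. After that, injectivity of $A$ turns fixed points of $T$ into zeros of $F$.

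For the contraction estimate, fix $x, y \in B(\bx, r)$. Since $r \le R$, the segment $x_t = y + t(x-y)$, $t\in[0,1]$, lies in the convex set $B(\bx,r) \subset B(\bx,R)$. $T$ is $C^1$ with $DT(z) = I - A DF(z)$. Here we need $A$ bounded, which I take as implicit in the statement and in the hypothesis $\norm{I - ADF(x)}_{\mathscr{B}(X)}<\infty$. The mean value inequality in Banach spaces then gives
\[
\norm{T(x) - T(y)}_X \le \sup_{t\in[0,1]} \norm{I - A DF(x_t)}_{\mathscr{B}(X)} \norm{x-y}_X \le Z \norm{x-y}_X .
\]
This uses the form $T(x)-T(y) = \int_0^1 DT(x_t)(x-y)\,dt$, valid since $DF$ is continuous.

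Self-mapping follows from this estimate with $y = \bx$. For $x \in B(\bx,r)$,
\[
\norm{T(x) - \bx}_X \le \norm{T(x) - T(\bx)}_X + \norm{T(\bx)-\bx}_X \le Z r + \norm{AF(\bx)}_X \le Zr + Y \le r,
\]
where the last inequality is exactly $Y/(1-Z) \le r$ rearranged, using $Z<1$. The case $r = R = \infty$ is consistent: then $B(\bx,r) = X$, and only the contraction estimate is needed.

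Since $Z<1$, Banach's theorem gives a unique fixed point $\tx \in B(\bx,r)$ of $T$. Then $AF(\tx) = 0$, and injectivity of $A$ gives $F(\tx) = 0$. Conversely, any zero of $F$ in the ball is a fixed point of $T$, so it coincides with $\tx$; this gives uniqueness of the zero.

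The only delicate point is justifying the mean value step, which relies on convexity of the ball, $C^1$ regularity, and boundedness of $A$ so that $DT = I - ADF$ makes sense in $\mathscr{B}(X)$. The rest is routine.
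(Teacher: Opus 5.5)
Your proof is correct and follows the paper's argument. The mean value inequality along segments of the convex ball $B(\bx,r)\subset B(\bx,R)$ gives both the Lipschitz constant $Z$ and the self-map estimate $\norm{x - AF(x) - \bx}_X \le Y + rZ \le r$; Banach's fixed-point theorem and injectivity of $A$ then finish the proof, and the paper leaves these last two steps tacit. One refinement: you do not need to assume $A$ is bounded, though your stated reason is off, since finiteness of $\norm{I - A\,DF(x)}_{\mathscr{B}(X)}$ alone would not give it (take $DF \equiv 0$). What does give it is injectivity of $A$ together with $\norm{I - A\,DF(\bx)}_{\mathscr{B}(X)} \le Z < 1$: these force $DF(\bx)$ to be bijective, so $A = \bigl(A\,DF(\bx)\bigr)\,DF(\bx)^{-1}$ is bounded by the bounded inverse theorem.
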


\begin{proof}
For $x \in B(\bx, r)$, we have by the mean value inequality
\begin{align*}
\norm{x - A F(x) - \bx}_X
&\le \norm{A F(\bx)}_X + r \int_0^1 \norm{I - A DF(\bx + s(x - \bx))}_{\mathscr{B}(X)} \, \mathrm{d}s \\
&\le Y + r Z \\
&\le r,
\end{align*}
which proves that $x \mapsto x - A F(x)$ is a map from $B(\bx, r)$ to itself.
Moreover, for $x,y \in B(\bx, r)$,
\begin{align*}
\norm{x - A F(x) - (y - A F(y))}_X
&\le \left(\sup_{\xi \in B(\bx, r)} \norm{I - A DF(\xi)}_{\mathscr{B}(X)} \right) \norm{x - y}_X \\
&\le Z \norm{x - y}_X,
\end{align*}
which proves that $x \mapsto x - A F(x)$ is a contraction on $B(\bx, r)$ since $Z < 1$.
\end{proof}

\noindent
We interpret
\begin{enumerate}
\item $Y$ as measuring the quality of the approximate zero $\bx$ of $F$.
\item $Z = Z(R)$ as measuring the quality of the approximate inverse $A$ at $\bx$, as well as quantifying the variation of the fixed-point operator $x \mapsto x - AF(x)$ over $B(\bx, R)$.
\item $R$ as an a priori error threshold, determining the ball for the $Z(R)$ bound. In special cases, one can take $R = \infty$, for instance when $F$ is affine (so that $DF$ is constant).
\end{enumerate}

For a finite-dimensional space $X$, the Radii Polynomial Theorem~\ref{thm:rpa} can be applied directly using interval arithmetic, as detailed in Section~\ref{sec:equilibria}.
Importantly, the injectivity of $A$ need not be established a priori.
Indeed, the condition $Z < 1$ implies that $A$ is surjective, and, since $X$ is finite-dimensional, $A$ is a square matrix that must also be injective.

In contrast, when $X$ is an infinite-dimensional function space, the procedure is less evident and requires analytical estimates to derive computable formulas for $Y$ and $Z$.
In this paper, we address the following questions: \emph{How can analytic functions be modeled in a sequence space? How can nonlinearities be controlled in this setting? How can the estimates for the bounds $Y$ and $Z$ be reduced to a finite set of computations that can be carried out by the computer?}


\section{Validated computation of equilibria and eigenspaces}\label{sec:eq_eig}

The vector field of system~\eqref{eq:shimizu_morioka} is given by
\begin{equation}\label{eq:vf}
f(x, y, z) \bydef
\begin{pmatrix}
y \\
x-ay-xz \\
-b z + x^2
\end{pmatrix}, \qquad \text{and we choose the parameter values } a = \frac{3}{4}, \, b = \frac{9}{20}.
\end{equation}
The implementation of the vector field $f$ and its Jacobian $Df$ is as follows.
\begin{juliasplit}
\begin{minted}{julia}
function f(u, params)
    a, b = params
    x, y, z = u
    return [y
            x - a*y - x*z
            -b*z + x^exact(2)]
end
\end{minted}
\tcblower
\begin{minted}{julia}
function Df(u, params)
    a, b = params
    x, y, z = u
    return [zero(x)     one(y)     zero(z)
            exact(1)-z  -a*one(y)  -x
            exact(2)*x  zero(y)    -b*one(z)]
end
\end{minted}
\end{juliasplit}

\begin{remark}\label{rem:generic_f_Df}
The functions \texttt{f} and \texttt{Df} above are written in a generic form so that they can be reused throughout the paper.
In particular, two implementation choices deserve comment:
\begin{enumerate}
\item
Integer literals are wrapped with \texttt{exact}.
The \texttt{exact} function, provided by \texttt{IntervalArithmetic} \cite{IntervalArithmetic.jl}, declares that the underlying number is to be treated as mathematically exact; arithmetic mixing an interval with such a number then retains rigorous semantics.
Without this marker, mixing intervals, for instance, with floating-point numbers produces a result flagged as \emph{NG (Not Guaranteed)}.
This will allow \texttt{f} and \texttt{Df} to work properly for floating-point and interval inputs.
\item
The functions \texttt{zero} and \texttt{one} return, respectively, the additive and multiplicative identities in the type of their argument.
This ensures that \texttt{f} and \texttt{Df} to operate correctly for both floating-point and interval inputs.
\end{enumerate}
\end{remark}

\noindent
We make use of the \texttt{RadiiPolynomial} and \texttt{LinearAlgebra} libraries:
\begin{juliarepl}
julia> using RadiiPolynomial, LinearAlgebra
\end{juliarepl}
\noindent
Note that the \texttt{RadiiPolynomial} library automatically loads the \texttt{IntervalArithmetic} library \cite{IntervalArithmetic.jl}, making interval arithmetic immediately available.
We can now enclose rigorously the parameters $a, b$ using interval arithmetic:
\begin{juliarepl}
julia> parameters = (interval(3)/interval(4), interval(9)/interval(20)) # (a, b)
([0.75, 0.75], [0.45, 0.45])
\end{juliarepl}
%


\subsection{Computation of the equilibria}
\label{sec:equilibria}

The first task consists in identifying two distinct equilibria $c_0^\star, c_1^\star \in \R^3$ such that $f(c_0^\star) = f(c_1^\star) = 0$.
By inspection, $c_0^\star = (0, 0, 0)$ is an equilibrium of~\eqref{eq:vf}.
For the non-trivial equilibrium $c_1^\star$, the vector field~\eqref{eq:vf} is simple enough that an explicit formula could be obtained by hand; however, we use its validation as an easy first application of the Radii Polynomial Theorem~\ref{thm:rpa}.

The \emph{equilibrium problem} involves the vector field $f$, its Jacobian $Df$ and the parameters $a, b$.
We begin by creating a data structure \texttt{EquilibriumProblem} bundling together these elements.
\begin{juliacode}
struct EquilibriumProblem
    f
    Df
    parameters
end
\end{juliacode}
\codepair
\begin{juliarepl}
julia> pb = EquilibriumProblem(f, Df, parameters)
EquilibriumProblem(f, Df, ([0.75, 0.75], [0.45, 0.45]))
\end{juliarepl}
\noindent
The next four paragraphs provide the steps to apply and implement the Radii Polynomial Theorem~\ref{thm:rpa}.

\paragraph{Step 1: Defining the zero-finding problem.}
An equilibrium is a zero of the vector field $f : \R^3 \to \R^3$, where we choose to endow $\R^3$ with the $1$-norm
\begin{equation}
\norm{c}_{\R^3} \bydef \sum_{i=1}^3 |c^{(i)}|, \qquad \text{for all } c = (c^{(1)}, c^{(2)}, c^{(3)})\in \R^3.
\end{equation}
Hence, we apply the Radii Polynomial Theorem~\ref{thm:rpa} with $F = f$ on $X = \R^3$.
The implementation is therefore straightforward.
\begin{juliacode}
F(pb::EquilibriumProblem, c) = pb.f(c, pb.parameters)

DF(pb::EquilibriumProblem, c) = pb.Df(c, pb.parameters)
\end{juliacode}

\paragraph{Step 2: Computing the approximate zero (with floating-point arithmetic).}
We compute an accurate approximation $\bar{c} \in \R^3$ of the equilibrium using Newton's method, which is executed via the \texttt{newton} function available from the software library \texttt{RadiiPolynomial}.
Importantly, interval arithmetic is not needed at this stage; we replace the interval parameters in \texttt{EquilibriumProblem} by their midpoints.
\begin{juliacode}
function approximate_zero(pb_approx::EquilibriumProblem, c_init)
    F_DF(c) = F(pb_approx, c), DF(pb_approx, c)
    c_bar, newton_success = newton(F_DF, c_init)
    return c_bar, newton_success
end
\end{juliacode}
\codepair
\begin{juliarepl}
julia> pb_approx = EquilibriumProblem(pb.f, pb.Df, mid.(pb.parameters))
EquilibriumProblem(f, Df, (0.75, 0.44999999999999996))

julia> c1_bar, newton_success = approximate_zero(pb_approx, [1.0, 0.0, 1.0])
([0.6708203932499369, 0.0, 1.0], true)
\end{juliarepl}

\paragraph{Step 3: Constructing the approximate inverse (with floating-point arithmetic).}
The construction of $A$ is straightforward in finite dimensions.
Indeed, $DF(\bar{c}) = Df(\bar{c})$ is a matrix and we can rely on a numerical algorithm to produce an approximate inverse matrix.
We therefore compute $A \approx DF(\bar{c})^{-1}$ numerically using Julia's built-in \texttt{inv} function.
\begin{juliacode}
function approximate_inverse(pb_approx::EquilibriumProblem, c_bar)
    return inv(DF(pb_approx, c_bar))
end
\end{juliacode}
\codepair
\begin{juliarepl}
julia> A = approximate_inverse(pb_approx, c1_bar)
3×3 Matrix{Float64}:
 -0.375    -0.5      0.745356
  1.0       0.0      0.0
 -1.11803  -1.49071  0.0
\end{juliarepl}

\paragraph{Step 4: Estimating the bounds (with interval arithmetic).}
The $Y$ bound
\begin{equation}
Y = \norm{A F ( \bar{c} )}_{\R^3}
\end{equation}
consists of taking the $1$-norm of a matrix-vector product, which is accomplished directly using interval arithmetic.

For a fixed $R \ge 0$, the calculation of $Z = Z(R)$ requires taking a supremum over the closed ball $B(\bar{c}, R)$.
As $B(\bar{c}, R)$ sits in the finite-dimensional space $X = \R^3$ endowed with the $1$-norm topology, we can enclose it by a three-dimensional \emph{interval box}:
\begin{equation}
B(\bar{c}, R)
\subset \mathrm{Box}(\bar{c}, R)
= \begin{pmatrix}[\bar{c}^{(1)} - R, \bar{c}^{(1)} + R] \\ [\bar{c}^{(2)} - R, \bar{c}^{(2)} + R] \\ [\bar{c}^{(3)} - R, \bar{c}^{(3)} + R] \end{pmatrix}.
\end{equation}
We make the heuristic choice that $R \sim 10 Y$, and compute
\begin{equation}
Z = \norm{I - A Df( \mathrm{Box}(\bar{c}, R) )}_{\mathscr{B}(\R^3)}.
\end{equation}

Once $Y$ and $Z$ have been computed, we can check the contraction criterion $Z < 1$.
Upon its verification, the computer-assisted proof has succeeded, and the Radii Polynomial Theorem~\ref{thm:rpa} yields an \emph{interval of existence} whose infimum $r = Y/(1-Z)$, is a rigorous a posteriori error bound, with respect to the norm $\norm{\,\cdot\,}_{\R^3}$, on the approximate equilibrium $\bar{c}$.
This verification is done using the \texttt{interval\_of\_existence} function provided by the \texttt{RadiiPolynomial} library, which returns the interval of existence together with a Boolean value \texttt{true}, or \texttt{false}, signaling the success, or failure, of the computer-assisted proof.
\begin{juliacode}
Y_bound(pb::EquilibriumProblem, c_bar, A) = norm(A * F(pb, c_bar), 1)

function Z_bound(pb::EquilibriumProblem, c_bar, A, R)
    box_c = interval.(c_bar, R; format = :midpoint)
    return opnorm(interval(I) - A * DF(pb, box_c), 1)
end
\end{juliacode}
\codepair
\begin{juliarepl}
julia> Y = Y_bound(pb, interval(c1_bar), interval(A))
[0.0, 8.27511e-17]

julia> R = interval(10sup(Y))
[8.27511e-16, 8.27511e-16]

julia> Z_bound(pb, interval(c1_bar), interval(A), R)
[6.93889e-17, 3.32242e-15]

julia> ie, proof_success = interval_of_existence(Y, Z, R)
([8.27511e-17, 8.27511e-16]_com, true)

julia> r_inf = inf(ie)
8.275113844157605e-17

julia> c1_star = interval.(c1_bar, r_inf; format = :midpoint)
3-element Vector{Interval{Float64}}:
  [0.67082, 0.67082]
 [-8.27511e-17, 8.27511e-17]
  [1.0, 1.0]
\end{juliarepl}
In the above last line of code evaluation, we represent the mathematically exact equilibrium $c^\star$, proven to lie in $B(\bar{c}, r)$, as the interval box
\begin{equation}
c^\star
\in \mathrm{Box}(\bar{c}, r)
= \begin{pmatrix}[\bar{c}^{(1)} - r, \bar{c}^{(1)} + r] \\ [\bar{c}^{(2)} - r, \bar{c}^{(2)} + r] \\ [\bar{c}^{(3)} - r, \bar{c}^{(3)} + r] \end{pmatrix}.
\end{equation}
%


\subsection{Computation of the eigenspaces}
\label{sec:eigenspaces}

Having determined the two equilibria $c_0^\star, c_1^\star \in \R^3$, we now recover the stable and unstable eigenspaces.
To achieve this, for each $i=0,1$, we solve the eigenvalue problem
\begin{equation}
Df(c_i^\star) v = \lambda v.
\end{equation}
Provided that the eigenvalues are simple, an adequate zero-finding problem is
\begin{equation}
\begin{pmatrix}
Df( c_i^\star )v - \lambda v \\
v^{(l_*)} - 1
\end{pmatrix} = 0, \qquad (v, \lambda) \in \C^3 \times \C,
\end{equation}
for some fixed index $l_* \in \{1, 2, 3\}$.
The equation $v^{(l_*)} - 1 = 0$ is a \emph{normalization condition} to isolate the eigenvector in $\ker(Df( c_i^\star ) - \lambda I)$, which is enough when the kernel is one-dimensional.

This is again a finite-dimensional problem, and we can apply the Radii Polynomial Theorem~\ref{thm:rpa} with $F = F_\mathrm{eig}$ on $X = \C^3 \times \C$, following the same steps as we did in the proof of the equilibria.
Since the procedure is virtually identical, we do not repeat it, and only summarize the obtained result:
\begin{enumerate}
\item $c_0^\star$ is a saddle equilibrium with \emph{two real stable eigenvalues} and one real unstable eigenvalue.
\item $c_1^\star$ is a saddle equilibrium with one stable eigenvalue and \emph{two complex conjugate unstable eigenvalues}.
\end{enumerate}


\section{Validated computation of invariant manifolds}\label{sec:manifold}

We now turn to the nonlinear analogues of the eigenspaces, which will be used to set up the boundary-value problem proving the existence of a connecting orbit.
The classical (Un)Stable Manifold Theorem guarantees the existence of a local manifold, invariant under the flow, as the image of a graph over the (un)stable eigenspace \cite{Chicone}.
To state things briefly, denoting by $\varphi_t : \R^3 \to \R^3$ the flow associated with the Shimizu--Morioka system~\eqref{eq:shimizu_morioka}, the local stable and unstable manifolds consist of the set of initial conditions in phase space such that
\begin{subequations}
\begin{align}
W^{s}_{loc}(c) &= \left\{ \phi \in \text{neighbourhood of $c$} \, : \, \lim_{t\to +\infty} \varphi_t(\phi) = c \right\}, \\
W^{u}_{loc}(c) &= \left\{ \phi \in \text{neighbourhood of $c$} \, : \, \lim_{t\to -\infty} \varphi_t(\phi) = c \right\}.
\end{align}
\end{subequations}
Under some \emph{mild non-resonance condition} (detailed below), it is in fact possible to parameterize the local manifold without requiring it to be a graph; this strategy is called the \emph{parameterization method} \cite{MR1976079,MR1976080,MR2177465,HaroEtAl2016}.
This parameterization $P$ defines a diffeomorphism, mapping a portion of the eigenspace onto the local invariant manifold.

Let $c \in \R^3$ be an equilibrium of $f$, and $\lambda = (\lambda_1, \dots, \lambda_d)$ the collection of all the $d \ge 1$ stable (resp. unstable) eigenvalues of $Df(c)$.
We denote by $\Lambda$ the $d$-by-$d$ diagonal matrix whose diagonal entries are given by $\lambda$.
The Hartman--Grobman Theorem \cite{Chicone} states that the linear and nonlinear dynamics near a hyperbolic equilibrium are topologically conjugate.
Motivated by this, we look for $P : U \subset \C^d \to \C^3$ as a \emph{topological conjugacy} between the (unknown) nonlinear flow $\varphi_t$ and the linear flow $e^{\Lambda t}$ restricted to the stable (resp. unstable) eigenspace:
\begin{equation}\label{eq:conjugacy_relation}
\varphi_t \circ P = P \circ e^{\Lambda t}, \qquad P(0) = c.
\end{equation}
Then, the infinitesimal version of the conjugacy relation~\eqref{eq:conjugacy_relation} is obtained by differentiating with respect to $t$ at $t=0$:
\begin{equation}\label{eq:invariance_equation}
DP(\theta) \Lambda \theta = f(P(\theta)).
\end{equation}
In words, the above \emph{invariance equation} means that $DP(\theta)$ maps the tangent vector $\Lambda \theta \in \C^d$ to a vector tangent to $P(U) \subset \C^3$.
The image $P(U)$ is the immersed local stable (resp. unstable) manifold.

\begin{remark}[Real image of the parameterization]
For two real eigenvalues, the parameterization is real-valued on the real domain, namely $P : U \cap \R^2 \to \R^3$.
For a pair of complex conjugate eigenvalues, the parameterization instead satisfies the conjugation symmetry
\begin{equation}
P(\theta_2^*, \theta_1^*) = P(\theta_1, \theta_2)^*,
\end{equation}
with the superscript $^*$ denoting complex conjugation.
Thus, the restriction of $P$ to $\big\{ (\rho e^{\mathrm{i} \alpha}, \rho e^{-\mathrm{i} \alpha}) : (\rho, \alpha) \in \R^2 \big\} \cap U$ is real-valued, and recovers the local invariant manifold.
\end{remark}

We consider a Taylor series expansion of $P = (P^{(1)}, P^{(2)}, P^{(3)})$, i.e.,
\begin{equation}
P(\theta)
= \sum_{k = (k_1,\dots,k_d) \in \N_0^d} P_k \theta^k, \qquad
\theta^k = \theta_1^{k_1} \cdots \theta_d^{k_d}, \quad P_k = (P_k^{(1)}, P_k^{(2)}, P_k^{(3)}),
\end{equation}
where $\N_0 \bydef \{0, 1, 2, \dots\}$.
There are two important observations which we give without proof (see, e.g., \cite{MR1976079,vanDenBergMirelesJamesReinhardt2016}):
\begin{enumerate}
\item
A necessary condition for the solvability of~\eqref{eq:invariance_equation} is that the stable (resp. unstable) eigenvalues $\lambda_1, \dots, \lambda_d$ satisfy the \emph{non-resonance condition}
\begin{equation}
\lambda_i = k_1 \lambda_1 + \ldots + k_d \lambda_d \qquad \text{if and only if} \qquad k_j = \begin{cases}1, & j=i, \\ 0, & j \ne i.\end{cases}
\end{equation}
\item\label{it:scaling}
The scaling of the corresponding eigenvectors $v_1, \dots, v_d \in \C^3$ controls the size of the image of $P$ and its radius of convergence.
\end{enumerate}

In light of the above discussion, the \emph{manifold problem} involves the vector field $f$, its Jacobian $Df$, the parameters $a, b$, as well as the rigorously validated objects from Section~\ref{sec:eq_eig}: the equilibrium $c^\star$ with the associated collection of (un)stable eigenvalues $\lambda_1^\star, \dots, \lambda_d^\star$ and eigenvectors $v_1^*, \dots, v_d^*$.
We store all of these in a data structure.
\begin{juliacode}
struct ManifoldProblem
    f
    Df
    parameters
    equilibrium
    eigvals
    eigvecs
end
\end{juliacode}

\paragraph{Step 1: Defining the zero-finding problem.}
We observed in Section~\ref{sec:eigenspaces} that the stable and unstable eigenspaces are two-dimensional, and so $d=2$.
Thus, we look for a bivariate Taylor series $P$, whose components $P^{(i)}$ belong to the space
\begin{equation}
X_{T,\nu}^{\otimes 2} \bydef
\left\{ u(\theta_1, \theta_2) = \sum_{k_1, k_2 \ge 0} u_{(k_1,k_2)} \theta_1^{k_1} \theta_2^{k_2} \, : \, \norm{u}_{X_{T,\nu}^{\otimes 2}} \bydef \sum_{k_1, k_2 \ge 0} | u_{(k_1,k_2)} | \nu^{k_1 + k_2 } < \infty  \right\}.
\end{equation}
Note that power series in that space are analytic inside the \emph{polydisk of radius $\nu$}
\begin{equation}
\mathbb{D}_\nu^2 \bydef \left\{ (z_1,z_2) \in \C^2 \, : \, |z_1| < \nu, \, |z_2| < \nu \right\}.
\end{equation}

Substituting the power series into~\eqref{eq:invariance_equation}, we obtain the infinite system of equations
\begin{equation}\label{eq:set_eq}
\begin{aligned}
\text{(zero order)} \qquad & f(P_{(0,0)}) = 0, \\
\text{(first order)} \qquad
& Df(P_{(0,0)})\, P_{(1,0)} = \lambda_1 P_{(1,0)}, \\
& Df(P_{(0,0)})\, P_{(0,1)} = \lambda_2 P_{(0,1)}, \\
\text{(higher order)} \qquad
& (k \cdot \lambda)\, P_k = [f(P)]_k, \qquad k_1 + k_2 \ge 2.
\end{aligned}
\end{equation}
As expected, $P_{(0,0)}$ corresponds to the equilibrium, while $P_{(1,0)}, P_{(0,1)}$ are the two stable (resp. unstable) eigenvectors.
At first order, the equation simply expresses that the stable (resp. unstable) eigenspace is tangent to the stable (resp. unstable) manifold at the equilibrium.

Consider the Banach space
\begin{equation}
\mathcal{X}_{T,\nu} \bydef \left( X_{T,\nu}^{\otimes 2} \right)^3, \qquad
\norm{P}_{\mathcal{X}_{T,\nu}} \bydef \sum_{i = 1}^3 \norm{P^{(i)}}_{X_{T,\nu}^{\otimes 2}}, \quad \text{for all } P = (P^{(1)}, P^{(2)}, P^{(3)}) \in \mathcal{X}_{T,\nu} .
\end{equation}
%
We can then rewrite the set of equations~\eqref{eq:set_eq} as the zero-finding problem $F : \mathcal{X}_{T,\nu} \to \mathcal{X}_{T,\nu}$ given by
\begin{equation}\label{eq:zero-finding-manifold}
F(P) \bydef
P - \phi - \mathrm{Diag}(\mathcal{L}_{T}) f(P),
\end{equation}
where, using the validated computation of the equilibrium $c^\star$ and (un)stable eigenpairs $(v_1^\star, \lambda_1^\star),(v_2^\star, \lambda_2^\star)$ from Section~\ref{sec:eq_eig},
\begin{equation}
\phi(\theta) = c^\star + v_1^\star \theta_1 + v_2^\star \theta_2,
\end{equation}
and,
\begin{equation}\label{eq:L_T}
\mathrm{Diag}( \mathcal{L}_{T} )
\bydef
\begin{pmatrix}
\mathcal{L}_{T} & 0 & 0 \\
0 & \mathcal{L}_{T} & 0 \\
0 & 0 & \mathcal{L}_{T}
\end{pmatrix}, \qquad
(\mathcal{L}_{T})_{k,l}
\bydef
\begin{cases}
(k_1 \lambda_1^\star + k_2 \lambda_2^\star)^{-1}, & k = l, \, k_1 + k_2 \ge 2, \\
0, & \text{otherwise}.
\end{cases}
\end{equation}

In the \texttt{RadiiPolynomial} library, a \texttt{Sequence} stores the coefficients of an expansion in a prescribed basis (Taylor, Fourier, Chebyshev, $\ldots$), with the basis encoded as a type parameter; tensor products of bases are assembled via \texttt{TensorSpace}, so that \texttt{TensorSpace\{NTuple\{2,Taylor\}\}} is the space of bivariate Taylor series relevant here.
Linear operators acting on such sequences likewise carry their domain and codomain in their type, and a hierarchy of abstract types (e.g., \texttt{AbstractDiagonalOperator}) groups them by structure.
We therefore implement $\mathcal{L}_T$ as a new data type \texttt{L\_manifold} declared to be a subtype (via \texttt{<:}) of \texttt{AbstractDiagonalOperator}, and we specify its action by overloading the \texttt{getcoefficient} function from the \texttt{RadiiPolynomial} library: each matrix entry is indexed by a pair $(\text{codom}, k)$, $(\text{dom}, l)$ of (space, multi-index), and, as per the definition of $\mathcal{L}_T$ given in~\eqref{eq:L_T}, the method returns $(k_1 \lambda_1 + k_2 \lambda_2)^{-1}$ when $k = l$ and $k_1 + k_2 \ge 2$, and zero otherwise.
\begin{juliacode}
struct L_manifold <: AbstractDiagonalOperator
    λ₁
    λ₂
end

function RadiiPolynomial.getcoefficient(L::L_manifold, (codom, k)::T, (dom, l)::T) where {T<:Tuple{TensorSpace{NTuple{2,Taylor}},NTuple{2,Int}}}
    x = inv(exact(k[1]) * L.λ₁ + exact(k[2]) * L.λ₂)
    k == l && sum(k) ≥ 2 && return x
    return zero(x)
end
\end{juliacode}
Furthermore, we note that $X_{T,\nu}^{\otimes 2}$ comes naturally equipped with a multiplication operation, the \emph{Cauchy product}, so that for any $P, Q \in X_{T,\nu}^{\otimes 2}$, we have
\begin{equation}\label{eq:cauchy_product}
\begin{aligned}
&P(\theta) Q(\theta) = (P * Q)(\theta) = \sum_{k_1,k_2 \ge 0} (P * Q)_{(k_1,k_2)} \theta_1^{k_1}\theta_2^{k_2}, \\
&\text{where } (P * Q)_{(k_1,k_2)} \bydef \sum_{l_1=0}^{k_1} \sum_{l_2=0}^{k_2} P_{(k_1 - l_1, k_2 - l_2)} Q_{(l_1,l_2)}.
\end{aligned}
\end{equation}
In fact, $(X_{T,\nu}^{\otimes 2}, *)$ forms a Banach algebra as $\norm{P * Q}_{X_{T,\nu}^{\otimes 2}} \le \norm{P}_{X_{T,\nu}^{\otimes 2}} \norm{Q}_{X_{T,\nu}^{\otimes 2}}$.
Hence, the vector field $f : \R^3 \to \R^3$ naturally extends as a map acting on $\mathcal{X}_{T,\nu}$, and we use the same notation to denote this map $f : \mathcal{X}_{T,\nu} \to \mathcal{X}_{T,\nu}$, so that
\begin{equation}
f(P) =
\begin{pmatrix}
P^{(2)} \\
P^{(1)} - a P^{(2)} - P^{(1)} * P^{(3)} \\
-b P^{(3)} + P^{(1)} * P^{(1)}
\end{pmatrix}, \qquad \text{for all } P = (P^{(1)}, P^{(2)}, P^{(3)}) \in \mathcal{X}_{T,\nu}.
\end{equation}
\begin{juliacode}
function F(pb::ManifoldProblem, u)
    L = L_manifold(pb.eigvals[1], pb.eigvals[2])

    θ₁ = Sequence(Taylor(1) ⊗ Taylor(0), [exact(0), exact(1)])
    θ₂ = Sequence(Taylor(0) ⊗ Taylor(1), [exact(0), exact(1)])
    ϕ = pb.equilibrium + pb.eigvecs[:,1] .* θ₁ + pb.eigvecs[:,2] .* θ₂

    return u - ϕ - L .* pb.f(u, pb.parameters)
end

function DF(pb::ManifoldProblem, u)
    L = L_manifold(pb.eigvals[1], pb.eigvals[2])

    return Diagonal([exact(I), exact(I), exact(I)]) - L .* Multiplication.(pb.Df(u, pb.parameters))
end
\end{juliacode}

\paragraph{Step 2: Computing the approximate zero (with floating-point arithmetic).}
We wish to obtain a polynomial approximation of order $K\ge 2$ of the manifold
\begin{equation}
\bar{P}(\theta) = \sum_{k_1 = 0}^K \sum_{k_2 = 0}^K \bar{P}_{(k_1,k_2)} \theta_1^{k_1} \theta_2^{k_2},
\end{equation}
Formally, we introduce the projection operator $\Pi_{\le K} : X_{T,\nu}^{\otimes 2} \to X_{T,\nu}^{\otimes 2}$ defined by
\begin{equation}
(\Pi_{\le K} P)_k \bydef
\begin{cases}
P_k, & \max(k_1, k_2) \le K, \\
0, & \max(k_1, k_2) > K,
\end{cases} \qquad k=(k_1,k_2)\in \N_0^2.
\end{equation}
Its complement is denoted by $\Pi_{> K} \bydef I - \Pi_{\le K}$.
Moreover, we extend $\Pi_{\le K}$ to $\mathcal{X}_{T,\nu}$ by acting component-wise
\begin{equation}
\Pi_{\le K} P = (\Pi_{\le K} P^{(1)}, \Pi_{\le K} P^{(2)}, \Pi_{\le K} P^{(3)}),
\end{equation}
and similarly for $\Pi_{>K}$.

We use Newton's method on a finite approximation of the zero-finding problem~\eqref{eq:zero-finding-manifold} $\Pi_{\le K} \circ F \circ \Pi_{\le K}$, and where $c^\star$, $\lambda_1^\star, \lambda_2^\star$ and $v_1^\star, v_2^\star$ are all replaced by their floating-point approximations.
The initial guess for Newton's method $P_{init}$ is a first order approximation of the manifold, i.e., $P_{init} \in \Pi_{\le 1} \mathcal{X}_{T,\nu}$ satisfies the zeroth and first order conditions in~\eqref{eq:set_eq},
\begin{subequations}
\begin{align}
P_{init} &= P_{init,(0,0)} + P_{init,(1,0)} \theta_1 + P_{init,(0,1)} \theta_2, \\
P_{init,(0,0)} &= \bar{c}, \qquad P_{init,(1,0)} = \bar{v}_1, \qquad P_{init,(0,1)} = \bar{v}_2.
\end{align}
\end{subequations}
As observed in Point~\ref{it:scaling}, the scaling of the eigenvectors controls the decay rate of the Taylor series, so by rescaling them one can ensure convergence of Newton's method.
\begin{juliacode}

function approximate_zero(pb_approx::ManifoldProblem, order)
    P_init = zeros(eltype(pb_approx.eigvals), (Taylor(order) ⊗ Taylor(order))^3)
    for j ∈ 1:3
        block(P_init, j)[(0,0)] = pb_approx.equilibrium[j]
        block(P_init, j)[(1,0)] = pb_approx.eigvecs[j,1]
        block(P_init, j)[(0,1)] = pb_approx.eigvecs[j,2]
    end

    F_DF_newton(P) = F(pb_approx, block(P)), DF(pb_approx, block(P))
    P_bar, newton_success = newton(F_DF_newton, P_init; verbose = true)
    return P_bar, newton_success
end
\end{juliacode}

\paragraph{Step 3: Constructing the approximate inverse (with floating-point arithmetic).}
This step is less straightforward than in the finite-dimensional case.
Indeed, simply inverting a finite truncation of $DF(\bar{P})$ is not sufficient, and we must exploit its structure.
The Fr\'echet derivative reads
\begin{equation}
DF(P) =
I - \mathrm{Diag}( \mathcal{L}_{T} ) Df(P).
\end{equation}

Note that $Df(P)$ is a 3-by-3 block operator here, whose entries are multiplication operators.
Since the product is the Cauchy product given in~\eqref{eq:cauchy_product}, each entry $\partial_{u^{(j)}} f^{(i)}(P)$ corresponds to a lower triangular Toeplitz operator.
Moreover, $\mathcal{L}_{T} : X_{T,\nu}^{\otimes 2} \to X_{T,\nu}^{\otimes 2}$ is a compact operator: its tail $\Pi_{>K}\mathcal{L}_T$ has small operator norm for sufficiently large truncation order $K$.
Composing on the left with $\mathcal{L}_T$ then dampens rows beyond $k = (k_1, k_2)$ with $\max(k_1, k_2) > K$, so that $\mathcal{L}_{T} \circ \partial_{u^{(j)}} f^{(i)}(P)$ has negligible entries in those rows.
Visually,
\begin{equation}
\partial_{u^{(j)}} f^{(i)}(P) =
\begin{tikzpicture}[baseline=(m.center)]
\matrix (m) [matrix of math nodes,
             nodes in empty cells,
             nodes={minimum width=1em, minimum height=1em},
             left delimiter=(,
             right delimiter=),
             row sep=0.6em,
             column sep=0.6em] {
  &   &   &   \\
  &   & 0 &   \\
  &   &   &   \\
  &   &   &   \\
};

\fill[gray!20]
(m-4-1.south west) --
(m-4-4.south east) --
(m-1-1.north west) --
cycle;

\draw[thick]
(m-1-1.north west) -- (m-4-4.south east);

\end{tikzpicture}, \quad
\mathcal{L}_{T} \circ \partial_{u^{(j)}} f^{(i)}(P)
\approx
\begin{tikzpicture}[baseline=(m.center)]
\matrix (m) [matrix of math nodes,
             nodes in empty cells,
             nodes={minimum width=1em, minimum height=1em},
             left delimiter=(,
             right delimiter=),
             row sep=0.6em,
             column sep=0.6em] {
  &   &   &   \\
  &   & 0 &   \\
  &   &   &   \\
  & 0 &   &   \\
};

\fill[gray!20]
(m-3-1.north west) --
(m-3-3.north west) --
(m-1-1.north west) --
cycle;

\draw[dashed]
(m-3-1.north west) -- (m-3-3.north west);

\draw[thick]
(m-1-1.north west) -- (m-3-3.north west);
\draw[dashed]
(m-3-3.north west) -- (m-4-4.south east);

\draw[decorate,decoration={brace,mirror}]
($(m-3-1.north west)+(-1.5em,0)$) --
($(m-4-1.south west)+(-1.5em,0)$)
node[midway,xshift=-1.5em] {$\Pi_{>K}$};

\end{tikzpicture}.
\end{equation}
This logic applies to each of the blocks composing $DF(\bar{P})$.
Hence, for sufficiently large $K$, the term $\mathrm{Diag}(\mathcal{L}_{T}) Df(\bar{P})$ is well-approximated by a finite truncation, so
\begin{equation}
DF(\bar{P}) \approx I - \Pi_{\le K} \Big( \mathrm{Diag}( \mathcal{L}_{T} ) Df(\bar{P}) \Big) \Pi_{\le K}.
\end{equation}
This approximation of $DF(\bar{P})$ is far easier to invert: letting $A_{\le K} \approx \left( \Pi_{\le K} DF(\bar{P}) \Pi_{\le K}  \right)^{-1}$ (i.e., the inverse of a $3(K+1)^2$-by-$3(K+1)^2$ matrix), we set
\begin{equation}\label{eq:A_manifold}
A = A_{\le K} \Pi_{\le K} + \Pi_{> K}.
\end{equation}

\begin{juliacode}
function approximate_inverse(pb_approx::ManifoldProblem, P_bar)
    Π = Projection(space(P_bar))
    A_finite = inv(mid(Π * DF(pb_approx, block(P_bar)) * Π))
    A = interval(A_finite) + (interval(I) - interval(Π))
    return A
end
\end{juliacode}

\begin{remark}
That we chose $K \ge 1$ to be the same for $\bar{P}$ and $A$ is only for convenience and to avoid introducing too many symbols; but in principle, these may be chosen independently.
\end{remark}

\paragraph{Step 4: Estimating the bounds (with interval arithmetic).}
The first observation is that the $Y$ bound consists only of a finite number of calculations and is therefore directly computable.
Indeed, $\bar{P} \in \Pi_{\le K} \mathcal{X}_{T,\nu}$, and since $\mathcal{L}_T$ is diagonal and $f$ is quadratic, $F(\bar{P}) \in  \Pi_{\le 2K} \mathcal{X}_{T,\nu}$.
By construction, $A$ preserves this truncation order, so $AF(\bar{P}) \in  \Pi_{\le 2K} \mathcal{X}_{T,\nu}$, and therefore
\begin{equation}
\norm{ A F(\bar{P}) }_{\mathcal{X}_{T,\nu}} = \sum_{i = 1}^3 \norm{\sum_{j = 1}^3 \Pi_{\le 2K} A^{(i,j)} F^{(j)}(\bar{P})}_{X_{T,\nu}^{\otimes 2}}.
\end{equation}

Since the $Z(R)$ bound in the Radii Polynomial Theorem~\ref{thm:rpa} consists in estimating the supremum of the operator norm of $I - A DF(P)$ over the closed ball $B(\bar{P}, R) \subset \mathcal{X}_{T,\nu}$, it requires some more analysis to derive formulas that can be estimated by a computer.

\begin{pproposition}[Formula for the $Z$ bound]\label{prop:rpa_manifold}
Let $K \ge 2$ and $\bar{P} \in \Pi_{\le K} \mathcal{X}_{T,\nu}$, and $\lambda_1^\star, \lambda_2^\star \in \C$ such that $\mathrm{Re}(\lambda_1^\star) \mathrm{Re}(\lambda_2^\star) > 0$, i.e., both eigenvalues lie in the same half-plane.
Consider the map $F$ given in~\eqref{eq:zero-finding-manifold} and $A$ given in~\eqref{eq:A_manifold}.
Consider $R \in [0,\infty]$, and $Z_0, Z_1(R) \ge 0$ satisfying
\begin{subequations}\label{eq:Z_bound_ivp}
\begin{align}
\begin{aligned}
\max\bigl(\, &\norm{\Pi_{\le K} - \Pi_{\le 2K} A \Pi_{\le 2K} DF(\bar{P}) \Pi_{\le K}}_{\mathscr{B}(\mathcal{X}_{T,\nu})}, \\
&\norm{\Pi_{> K} \mathcal{L}_T \Pi_{> K}}_{\mathscr{B}(\mathcal{X}_{T,\nu})} \norm{D f(\bar{P})}_{\mathscr{B}(\mathcal{X}_{T,\nu})} \,\bigr)
\end{aligned} &\le Z_0, \\
2R \norm{A \mathrm{Diag}(\mathcal{L}_{T})}_{\mathscr{B}(\mathcal{X}_{T,\nu})} &\le Z_1(R),
\end{align}
\end{subequations}
where
\begin{subequations}
\begin{align}
\norm{\Pi_{> K} \mathcal{L}_T \Pi_{> K}}_{\mathscr{B}(\mathcal{X}_{T,\nu})}
&\le \frac{1}{(K+1)\min_{i=1,2} |\mathrm{Re}(\lambda_i^\star)|}, \\
\norm{A \mathrm{Diag}(\mathcal{L}_{T})}_{\mathscr{B}(\mathcal{X}_{T,\nu})}
&\le \max\left(\norm{A_{\le K} \mathrm{Diag}(\mathcal{L}_{T}) \Pi_{\le K}}_{\mathscr{B}(\mathcal{X}_{T,\nu})}, \norm{\Pi_{> K} \mathcal{L}_T \Pi_{> K}}_{\mathscr{B}(\mathcal{X}_{T,\nu})} \right).
\end{align}
\end{subequations}
Then,
\begin{equation}
\sup_{P \in B(\bar{P}, R)} \norm{I - A DF(P)}_{\mathscr{B}(\mathcal{X}_{T,\nu})} \le Z(R) = Z_0 + Z_1(R).
\end{equation}
\end{pproposition}

\begin{proof}
Let $P \in B(\bar{P}, R)$.
The triangle inequality yields
\begin{equation*}
\norm{I - A DF(P)}_{\mathscr{B}(\mathcal{X}_{T,\nu})}
\le \norm{I - A DF(\bar{P})}_{\mathscr{B}(\mathcal{X}_{T,\nu})} + \norm{A(DF(P) - DF(\bar{P}))}_{\mathscr{B}(\mathcal{X}_{T,\nu})}.
\end{equation*}
The goal is to show that the two terms are bounded by $Z_0$ and $Z_1(R)$, respectively.
The argument relies on two ingredients.
First, since $\mathrm{Re}(\lambda_1^\star) \mathrm{Re}(\lambda_2^\star) > 0$, we have $|k_1\lambda_1^\star + k_2 \lambda_2^\star| \ge (k_1 + k_2)\min_{i=1,2} |\mathrm{Re}(\lambda_i^\star)|$, so the tail bound of $\mathcal{L}_T$ satisfies
\begin{equation*}
\norm{\Pi_{>K} \mathcal{L}_{T} \Pi_{>K}}_{\mathscr{B}(X^{\otimes 2}_{T,\nu})}
= \sup_{\max(k_1,k_2) > K} \frac{1}{|k_1\lambda_1^\star + k_2 \lambda_2^\star|}
\le \frac{1}{(K+1)\min_{i=1,2}|\mathrm{Re}(\lambda_i^\star)|}.
\end{equation*}
Second, the operator norm on $\mathcal{X}_{T,\nu}$ decomposes as
\begin{equation*}
\norm{L}_{\mathscr{B}(\mathcal{X}_{T,\nu})}
= \max\left( \norm{L \Pi_{\le K}}_{\mathscr{B}(\mathcal{X}_{T,\nu})}, \norm{L \Pi_{> K}}_{\mathscr{B}(\mathcal{X}_{T,\nu})} \right), \qquad \text{for all } L \in \mathscr{B}(\mathcal{X}_{T,\nu}).
\end{equation*}

For the $Z_1(R)$ bound, since $DF(P) - DF(\bar{P}) = -\mathrm{Diag}(\mathcal{L}_T)(Df(P) - Df(\bar{P}))$ and the only entries of
\begin{equation*}
Df(P) =
\begin{pmatrix}
0 &     1  &   0 \\
1 - P^{(3)} &  -a &  -P^{(1)} \\
2P^{(1)}  &    0  & -b
\end{pmatrix}
\end{equation*}
that depend on $P$ are at positions $(2,1)$, $(2,3)$, and $(3,1)$.
A direct estimate yields
\begin{equation*}
\norm{Df(P) - Df(\bar{P})}_{\mathscr{B}(\mathcal{X}_{T,\nu})} \le 2R.
\end{equation*}
Applying the operator norm decomposition above with $L = A \mathrm{Diag}(\mathcal{L}_T)$ and using $A \Pi_{>K} = \Pi_{>K}$ together with the tail bound,
\begin{equation*}
\norm{A \mathrm{Diag}(\mathcal{L}_T)}_{\mathscr{B}(\mathcal{X}_{T,\nu})} \le
\max\left( \norm{A_{\le K} \mathrm{Diag}(\mathcal{L}_T) \Pi_{\le K}}_{\mathscr{B}(\mathcal{X}_{T,\nu})}, \frac{1}{(K+1)\min_{i=1,2}|\mathrm{Re}(\lambda_i^\star)|} \right).
\end{equation*}
Hence, $\norm{A(DF(P) - DF(\bar{P}))}_{\mathscr{B}(\mathcal{X}_{T,\nu})} \le 2R \norm{A \mathrm{Diag}(\mathcal{L}_T)}_{\mathscr{B}(\mathcal{X}_{T,\nu})} \le Z_1(R)$ as desired.

For the $Z_0$ bound, applying the operator norm decomposition above with $L = I - A DF(\bar{P})$ reduces the problem to estimating $\big[ I - A DF(\bar{P}) \big] \Pi_{\le K}$ and $\big[ I - A DF(\bar{P}) \big] \Pi_{> K}$ separately.
Using $A \Pi_{>K} = \Pi_{>K}$, that $\mathcal{L}_T$ is diagonal, and that $Df(\bar{P}) \Pi_{>K} = \Pi_{>K} Df(\bar{P}) \Pi_{>K}$ (from the property of the Cauchy product, which yields lower triangular multiplication operators), we obtain
\begin{equation*}
\big[ I - A DF(\bar{P}) \big] \Pi_{> K}
= \Pi_{> K} \mathrm{Diag}(\mathcal{L}_T) \Pi_{> K} Df(\bar{P}) \Pi_{> K},
\end{equation*}
whose norm is at most $\frac{1}{(K+1)\min_{i=1,2}|\mathrm{Re}(\lambda_i^\star)|} \norm{Df(\bar{P})}_{\mathscr{B}(\mathcal{X}_{T,\nu})}$ by submultiplicativity and the tail bound of $\mathcal{L}_T$ established above.
On the truncated part, since $\bar{P} \in \Pi_{\le K} \mathcal{X}_{T,\nu}$ and $f$ is quadratic, $DF(\bar{P}) \Pi_{\le K} = \Pi_{\le 2K} DF(\bar{P}) \Pi_{\le K}$.
Hence, $A DF(\bar{P}) \Pi_{\le K} = \Pi_{\le 2K} A \Pi_{\le 2K} DF(\bar{P}) \Pi_{\le K}$ is a finite matrix, and
\begin{equation*}
\norm{ \big[ I - A DF(\bar{P}) \big] \Pi_{\le K}}_{\mathscr{B}(\mathcal{X}_{T,\nu})}
= \norm{ \Pi_{\le K} - \Pi_{\le 2K} A \Pi_{\le 2K} DF(\bar{P}) \Pi_{\le K}}_{\mathscr{B}(\mathcal{X}_{T,\nu})}.
\end{equation*}
Combining these shows that $\norm{I - A DF(\bar{P})}_{\mathscr{B}(\mathcal{X}_{T,\nu})} \le Z_0$.
\end{proof}

We stress that, by construction of $A$, its surjectivity is equivalent to its injectivity; hence, verifying $Z(R) = Z_0 + Z_1(R) < 1$ is enough to imply that $A$ is injective.
Then, the Radii Polynomial Theorem~\ref{thm:rpa} yields a rigorous a posteriori error bound $r = Y/(1 - Z)$, with respect to the norm $\norm{\,\cdot\,}_{\mathcal{X}_{T,\nu}}$, on the polynomial approximation $\bar{P}$ of the parameterization of the local invariant manifold.
In particular, the mathematically exact parameterization $P^\star$ can be written as
\begin{equation}
P^\star = \bar{P} + h, \qquad \| h \|_{\mathcal{X}_{T,\nu}} \le \frac{Y}{1-Z}.
\end{equation}
\begin{juliacode}
Y_bound(pb::ManifoldProblem, P_bar, A, X) = norm(A * F(pb, block(P_bar)), X)

function Z_bound(pb::ManifoldProblem, P_bar, A, R, X_T, X)
    K = order(block(P_bar, 1))[1]
    Π = interval(Projection(space(P_bar)))

    L = Diagonal([L_manifold(pb.eigvals[1], pb.eigvals[2]) for j ∈ 1:3])
    tail_bound_L = inv(interval(K + 1) * minimum(abs ∘ real, pb.eigvals))
    opnorm_AL = max(opnorm(A * (L * Π), X), tail_bound_L)

    B = DF(pb, block(P_bar))
    W = pb.Df(block(P_bar), pb.parameters)
    Z₀ = max(opnorm(Π - A * (B * Π), X),
             tail_bound_L * opnorm(norm.(W, X_T), 1))

    Z₁ = exact(2) * R * opnorm_AL

    return Z₀ + Z₁
end
\end{juliacode}
%


\section{Validated computation of transverse intersection}\label{sec:heteroclinic}

To not overburden the article, we do not provide the code alongside the mathematical discussion; the code, however, is available in full at \cite{Code}.

We now have all the necessary pieces to prove the existence of a heteroclinic orbit connecting the two equilibria $c_0^\star, c_1^\star$, that is, we seek a solution a solution to the system~\eqref{eq:shimizu_morioka} satisfying
\begin{equation}
\lim_{t \to -\infty} w(t) = c_1^\star, \qquad \lim_{t \to +\infty} w(t) = c_0^\star.
\end{equation}
By restricting to a finite time interval $[0,\tau]$, this can be reformulated as the boundary value problem
\begin{equation}\label{eq:bvp_heteroclinic}
\begin{cases}
\displaystyle \frac{\mathrm{d}}{\mathrm{d}t} w(t) = f(w(t)), & t \in [0,\tau], \\
w(0) \in W^u_{\mathrm{loc}}(c_1^\star), \\
w(\tau) \in W^s_{\mathrm{loc}}(c_0^\star).
\end{cases}
\end{equation}

In the previous section, rigorous parameterizations of the local invariant manifolds have been computed
\begin{equation}
P^\star : \mathbb{D}_\nu^2 \to \mathbb{C}^3,
\qquad
Q^\star : \mathbb{D}_\nu^2 \cap \mathbb{R}^2 \to \mathbb{R}^3,
\end{equation}
representing the unstable manifold of $c_1^\star$ and the stable manifold of $c_0^\star$, respectively.
The boundary conditions can then be written as
\begin{equation}\label{eq:boundary_conditions}
w(0) = P^\star(\sigma),
\qquad
w(\tau) = Q^\star(\theta),
\end{equation}
where $\sigma, \theta \in \mathbb{D}_\nu^2$ are local manifold coordinates.

\paragraph{Step 1: Defining the zero-finding problem.}
Beyond the trajectory $t \mapsto w(t)$, the unknowns are the local coordinates $\sigma \in \mathbb{D}_\nu^2$ on the unstable side and $\theta \in \mathbb{D}_\nu^2 \cap \R^2$ on the stable side, together with the integration time $\tau > 0$, totaling 5 scalar parameters.
The boundary condition $w(\tau) = Q^\star(\theta)$ contributes 3 scalar equations, leaving 2 degrees of freedom to fix.
We do so by prescribing $\tau > 0$ and restricting the unstable coordinates to the unit circle,
\begin{equation}
\sigma = \gamma(\alpha) \bydef \left(e^{\mathrm{i}\alpha}, e^{-\mathrm{i}\alpha}\right), \qquad \alpha \in \R.
\end{equation}
Since $\nu > 1$ was used in Section~\ref{sec:manifold}, we do have that $\gamma(\alpha)$ lies inside the domain of analyticity $\mathbb{D}_\nu^2$ of the parameterization $P^\star$ of the local unstable manifold of $c^\star_1$.

To represent the trajectory $t \mapsto w(t)$, we expand it in Chebyshev polynomials of the first kind, $T_k(s) \bydef \cos(k \arccos s)$, $s \in [-1, 1]$.
Unlike Taylor series, Chebyshev expansions are well-suited to non-local trajectories.
In particular, an analytic function on $[-1,1]$ admits a Chebyshev series whose coefficients decay exponentially fast.
Hence, consider the Banach space, for $\mu \ge 1$,
\begin{equation}
X_{C,\mu} \bydef \left\{ u(s) = u_0 + 2 \sum_{k \ge 1} u_k T_k(s) \, : \, \norm{u}_{X_{C,\mu}} \bydef | u_0 | + 2 \sum_{k \ge 1} | u_k | \mu^k < \infty \right\}.
\end{equation}
For $\mu > 1$, series in $X_{C,\mu}$ extend analytically inside the Bernstein ellipse
\begin{equation}
\mathbb{E}_\mu \bydef \left\{ z \in \C \, : \, z = \frac{1}{2}(w + w^{-1}), \, |w| < \mu\right\}.
\end{equation}
We refer to \cite{Trefethen} for a thorough exposition.

We consider the time rescaling $s \in [-1,1] \mapsto t(s) = \tau(s + 1)/2$, and introduce the rescaled trajectory $u(s) = w(t(s))$.
Using the boundary conditions~\eqref{eq:boundary_conditions} and integrating~\eqref{eq:bvp_heteroclinic} from $-1$ to $s$, we obtain
\begin{equation}
\begin{cases}
\displaystyle u(s) = P^\star(\gamma(\alpha)) + \frac{\tau}{2} \int_{-1}^s f(u(s')) \, \mathrm{d} s', & s \in [-1,1], \\
u(1) = Q^\star(\theta).
\end{cases}
\end{equation}
Let
\begin{equation}
\mathcal{X}_{C,\mu} \bydef X_{C,\mu}^3 \times \R^3, \qquad
\norm{x}_{\mathcal{X}_{C,\mu}} \bydef \sum_{i = 1}^3 \norm{u^{(i)}}_{X_{C,\mu}} + |\alpha| + |\theta_1| + |\theta_2|,
\end{equation}
for all $x = (u, \alpha, \theta_1, \theta_2) \in \mathcal{X}_{C,\mu}$ with $u = (u^{(1)}, u^{(2)}, u^{(3)}) \in X_{C,\mu}^3$ and $\alpha, \theta_1, \theta_2 \in \R$.
Then, a heteroclinic orbit can be seen as a zero of the map $F : \mathcal{X}_{C,\mu} \to \mathcal{X}_{C,\mu}$ given by
\begin{equation}\label{eq:zero-finding-heteroclinic}
F(u, \alpha, \theta_1, \theta_2) \bydef
\begin{pmatrix}
\displaystyle u - P^\star (\gamma(\alpha)) - \frac{\tau}{2} \mathrm{Diag}( \mathcal{L}_C ) f(u) \\
u(1) - Q^\star (\theta_1, \theta_2)
\end{pmatrix},
\end{equation}
where
\begin{equation}\label{eq:Lc}
\mathrm{Diag}( \mathcal{L}_C ) \bydef
\begin{pmatrix}
\mathcal{L}_C & 0 & 0 \\
0 & \mathcal{L}_C & 0 \\
0 & 0 & \mathcal{L}_C
\end{pmatrix},
\qquad
(\mathcal{L}_C u )_k \bydef
\begin{cases}
\displaystyle u_0 - \frac{u_1}{2} + 2\sum_{l \ge 2} \frac{(-1)^{l+1}}{l^2 - 1} u_l, & k = 0, \\
\displaystyle \frac{u_{k-1} - u_{k+1}}{2k}, & k \ge 1.
\end{cases}
\end{equation}
As with Taylor series (see Section~\ref{sec:manifold}), we note that $X_{C,\mu}$ comes naturally equipped with a multiplication operation, the \emph{discrete convolution}, so that for any $u, w \in X_{C,\mu}$, we have
\begin{equation}\label{eq:cheb_convolution}
\begin{aligned}
&u(s) w(s) = (u * w)(s) = (u * w)_{0} + 2 \sum_{k \ge 0} (u * w)_{k} T_k(s), \\
&\text{where } (u * w)_{k} \bydef \sum_{l \in \Z} u_{|k - l|} w_{|l|}, \qquad k \ge 0.
\end{aligned}
\end{equation}
In addition, $(X_{C,\mu}, *)$ is a Banach algebra such that $\norm{u * w}_{X_{C,\mu}} \le \norm{u}_{X_{C,\mu}} \norm{w}_{X_{C,\mu}}$.
Once more, the vector field $f : \R^3 \to \R^3$ can be extended as a map $f : X_{C,\mu}^3 \to X_{C,\mu}^3$, where the arithmetic operations (addition and multiplication) should be understood as those of $X_{C,\mu}$.

\begin{remark}[Notation]
We purposely use the same notation $*$ for the Cauchy product and the discrete convolution, as well as for the vector field as a map on $\R^3$, $\mathcal{X}_{T,\nu}$ and $\mathcal{X}_{C,\mu}^3$.
The reason is that they all have the same meaning (similarly to how $+$ denotes the additions of real numbers and that of vectors); moreover, they are unambiguously interpreted from the context of the operand.
\end{remark}

\begin{remark}[Transversality]
We stress that the transversality of the intersection is a consequence of verifying the contraction in the Radii Polynomial Theorem~\ref{thm:rpa}.
This fact relates to the local uniqueness and invertibility of $DF$ at the zero $x^\star$ of $F$; see, e.g., \cite{LessardMirelesJamesReinhardt2014}.
\end{remark}

To use the map $F$ and its Fréchet derivative $DF$, we need to know how to rigorously evaluate the parameterizations $P^\star, Q^\star \in X_{T,\nu}^{\otimes 2}$ and their derivatives.

\begin{plemma}[Formula for rigorous evaluation]\label{lem:rigorous_eval}
Let $\nu > 1$, $P^\star \in X_{T,\nu}^{\otimes 2}$, $\bar{P} \in \Pi_{\le K} X_{T,\nu}^{\otimes 2}$ such that $\norm{P^\star - \bar{P}}_{X_{T,\nu}^{\otimes 2}} \le r$.
For $\theta = (\theta_1, \theta_2) \in \mathbb{D}_1^2$, it holds that
\begin{subequations}\label{eq:rigorous_eval}
\begin{align}
|P^\star(\theta_1, \theta_2) - \bar{P}(\theta_1, \theta_2)|
&\le r, \\
|\partial_{\theta_j} P^\star(\theta_1, \theta_2) - \partial_{\theta_j} \bar{P}(\theta_1, \theta_2)|
&\le \frac{r}{e | \theta_j | |\ln(| \theta_j | / \nu)|}, \qquad j=1,2.
\end{align}
\end{subequations}
\end{plemma}

\begin{proof}
The first inequality follows from $X_{T,\nu}^{\otimes 2} \subset X_{T,1}^{\otimes 2}$ together with the fact that the $X_{T,1}^{\otimes 2}$-norm controls the supremum norm on the polydisk $\mathbb{D}_1^2$.
For the second inequality, denoting $h = P^\star - \bar{P}$,
\begin{equation*}
|\partial_{\theta_1} h(\theta_1, \theta_2)|
\le \sum_{k_1\ge 1,\, k_2 \ge 0} k_1 |h_{(k_1,k_2)}| \, |\theta_1|^{k_1-1} |\theta_2|^{k_2}
\le r \sup_{k_1 \ge 1} k_1 |\theta_1|^{k_1-1} \nu^{-k_1}.
\end{equation*}
Let $\delta = |\theta_1| \le 1$, the continuous extension $g(s) \bydef s \delta^{-1} (\delta/\nu)^s$ has $g'(s) = \delta^{-1} (\delta/\nu)^s (1 + s \ln(\delta/\nu))$, which vanishes at $s_\mathrm{crit} = -1/\ln(\delta/\nu) > 0$.
Hence $\sup_{k_1 \ge 1} g(k_1) \le g(s_\mathrm{crit}) = (e \delta |\ln(\delta/\nu)|)^{-1}$.
\end{proof}

In other words, the two inequalities in~\eqref{eq:rigorous_eval} indicate that the evaluation of $P^\star$ (similarly for $Q^\star$) at a point $(\theta_1, \theta_2)$ in the interior of the unit polydisk can be enclosed rigorously by means of interval arithmetic; i.e.,
\begin{subequations}
\begin{align}
P^\star (\theta_1, \theta_2) &\in [\bar{P} (\theta_1, \theta_2) - r, \bar{P} (\theta_1, \theta_2) + r], \\
\partial_{\theta_j} P^\star (\theta_1, \theta_2) &\in \left[\partial_{\theta_j} \bar{P} (\theta_1, \theta_2) -\frac{r}{e | \theta_j | |\ln(| \theta_j | / \nu)|}, \partial_{\theta_j} \bar{P} (\theta_1, \theta_2) + \frac{r}{e | \theta_j | |\ln(| \theta_j | / \nu)|} \right].
\end{align}
\end{subequations}

\paragraph{Step 2: Computing the approximate zero (with floating-point arithmetic).}
We seek an approximation $\bar{x} = (\bar{u}, \bar{\alpha}, \bar{\theta}_1, \bar{\theta}_2) \in \mathcal{X}_{C,\mu}$, with
\begin{equation}
\bar{u}^{(i)}(s) = \bar{u}^{(i)}_0 + 2 \sum_{k = 1}^K \bar{u}^{(i)}_k T_k(s), \qquad i=1,2,3.
\end{equation}
The solution is produced via Newton's method on the finite approximation $\Pi_{\le K} \circ F \circ \Pi_{\le K}$ of~\eqref{eq:zero-finding-heteroclinic}, where $P^\star$ and $Q^\star$ are replaced by their finite-dimensional approximations $\bar{P}$ and $\bar{Q}$.
A good initial guess is less obvious here than in the previous sections.
We obtain one by integrating the ODE numerically with the \texttt{DifferentialEquations} library \cite{DifferentialEquations.jl}, then fitting the resulting trajectory by its Chebyshev interpolation polynomial.

\paragraph{Step 3: Constructing the approximate inverse (with floating-point arithmetic).}
The Fr\'echet derivative of the map $F$ given in~\eqref{eq:zero-finding-heteroclinic} reads
\begin{equation}\label{eq:DF_het}
DF(\bar{x}) =
\begin{pmatrix}
\,\boxed{\,I - \frac{\tau}{2} \mathrm{Diag}(\mathcal{L}_C) Df(\bar{u})\,}\,
& -DP^\star(\gamma(\bar{\alpha}))\, \gamma'(\bar{\alpha})
& 0 & 0 \\[2pt]
\mathcal{E}_1 & 0 & -\partial_{\theta_1} Q^{\star,(1)}(\bar{\theta}) & -\partial_{\theta_2} Q^{\star,(1)}(\bar{\theta}) \\
\mathcal{E}_2 & 0 & -\partial_{\theta_1} Q^{\star,(2)}(\bar{\theta}) & -\partial_{\theta_2} Q^{\star,(2)}(\bar{\theta}) \\
\mathcal{E}_3 & 0 & -\partial_{\theta_1} Q^{\star,(3)}(\bar{\theta}) & -\partial_{\theta_2} Q^{\star,(3)}(\bar{\theta})
\end{pmatrix},
\end{equation}
where $\mathcal{E}_i : X_{C,\mu}^3 \to \R$ denotes the evaluation $u \mapsto u^{(i)}(1) = \mathcal{E} u^{(i)}$, with the underlying functional $\mathcal{E}$ given by the infinite row
\begin{equation}
\mathcal{E} = \begin{pmatrix} 1 & 2 & 2 & 2 & \cdots \end{pmatrix}.
\end{equation}
We argue once more that $DF(\bar{x})$ can be approximated as a finite-dimensional perturbation of the identity.
The boxed block and functionals $\mathcal{E}_j$ are the only ones acting on the infinite-dimensional space $X_{C,\mu}^3$; the remaining entries are already finite-dimensional.

The functionals $\mathcal{E}_j$ are easily bounded on the tail part.
For $\mu > 1$,
\begin{equation}\label{eq:E_tail}
\norm{\mathcal{E}\Pi_{> K}}_{\mathscr{B}(X_{C,\mu}, \R)}
= \sup_{l > K} \frac{|\mathcal{E}_l|}{2 \mu^l}
= \frac{1}{\mu^{K+1}},
\end{equation}
which is negligible for $K$ large.

The boxed block is slightly more delicate.
Schematically, $\mathcal{L}_C$ has the matrix structure
\begin{equation*}
\mathcal{L}_C \sim
\begin{pmatrix}
\ast & \ast & \ast & \ast & \ast & \cdots \\[2pt]
\frac{1}{2} & 0 & -\frac{1}{2} & & & \\[2pt]
& \frac{1}{4} & 0 & -\frac{1}{4} & & \\[2pt]
& & \frac{1}{6} & 0 & -\frac{1}{6} & \\
& & & \ddots & \ddots & \ddots
\end{pmatrix},
\end{equation*}
where rows $k \ge 1$ are bidiagonal with $1/(2k)$ scaling and the dense first row has rapidly decaying entries.
Hence $\mathcal{L}_C$ is compact, and its tail $\Pi_{> K} \mathcal{L}_C$ and $\mathcal{L}_C \Pi_{> K}$ have small operator norm for large $K$ (an explicit bound is given in Proposition~\ref{prop:rpa_het} below).
The same logic as in Section~\ref{sec:manifold} therefore applies, and for sufficiently large $K$,
\begin{equation*}
I - \frac{\tau}{2} \mathrm{Diag}(\mathcal{L}_C) Df(\bar{u})
\approx I - \Pi_{\le K} \left( \frac{\tau}{2} \mathrm{Diag}(\mathcal{L}_C) Df(\bar{u}) \right)\Pi_{\le K}.
\end{equation*}
We set
\begin{equation}\label{eq:A_het}
A = A_{\le K} \Pi_{\le K} + \Pi_{> K}, \qquad A_{\le K} \approx \left( \Pi_{\le K} DF(\bar{x}) \Pi_{\le K} \right)^{-1}.
\end{equation}

\paragraph{Step 4: Estimating the bounds (with interval arithmetic).}

As before, the $Y$ bound consists of a finite calculation.
We conclude this section with the formula for the $Z$ bound.

\begin{pproposition}[Formula for the $Z$ bound]\label{prop:rpa_het}
Let $K \ge 2$ and $\bar{x} = (\bar{u}, \bar{\alpha}, \bar{\theta}) \in \Pi_{\le K} \mathcal{X}_{C,\mu}$.
Consider $F$ given in~\eqref{eq:zero-finding-heteroclinic} and $A$ given in~\eqref{eq:A_het}.
Consider $R \in [0,\infty]$, and $Z_0, Z_1(R) \ge 0$ satisfy
\begin{subequations}
\begin{align}
\begin{aligned}
\max\bigl(\, &\norm{\Pi_{\le 2K+1} - \Pi_{\le 3K+2} A \Pi_{\le 3K+2} DF(\bar{x}) \Pi_{\le 2K+1}}_{\mathscr{B}(\mathcal{X}_{C,\mu})}, \\
&M \frac{|\tau|}{2} \norm{Df(\bar{u})}_{\mathscr{B}(X_{C,\mu}^3)} \,\bigr)
\end{aligned} &\le Z_0, \\
2R \, \frac{|\tau|}{2} \norm{A}_{\mathscr{B}(\mathcal{X}_{C,\mu})} \norm{\mathcal{L}_C}_{\mathscr{B}(X_{C,\mu})} &\le Z_1(R),
\end{align}
\end{subequations}
where
\begin{equation}
M = \norm{A\Pi_{\le 0}}_{\mathscr{B}(\mathcal{X}_{C,\mu})} \norm{\Pi_{\le 0}\mathcal{L}_C \Pi_{> K+1}}_{\mathscr{B}(X_{C,\mu})} + \norm{\Pi_{>K}\mathcal{L}_C \Pi_{> K+1}}_{\mathscr{B}(X_{C,\mu})},
\end{equation}
and
\begin{subequations}\label{eq:bounds_het}
\begin{align}
\norm{A}_{\mathscr{B}(\mathcal{X}_{C,\mu})}
&= \max\left(\norm{A_{\le K}}_{\mathscr{B}(\mathcal{X}_{C,\mu})}, 1 \right), \\
\norm{\mathcal{L}_C}_{\mathscr{B}(X_{C,\mu})}
&= 1 + \mu, \\
\norm{\Pi_{\le 0} \mathcal{L}_C \Pi_{> K+1}}_{\mathscr{B}(X_{C,\mu})}
&= \frac{\mu^{-(K+2)}}{(K+2)^2 - 1}, \\
\norm{\Pi_{> K} \mathcal{L}_C \Pi_{> K+1}}_{\mathscr{B}(X_{C,\mu})}
&= \frac{\mu^{-1}}{2(K+1)} + \frac{\mu}{2(K+3)}.
\end{align}
\end{subequations}
Then,
\begin{equation}
\sup_{x \in B(\bar{x}, R)} \norm{I - A DF(x)}_{\mathscr{B}(\mathcal{X}_{C,\mu})} \le Z(R) = Z_0 + Z_1(R).
\end{equation}
\end{pproposition}

\begin{proof}
The bounds in~\eqref{eq:bounds_het} follow from the definitions of the operators.
For $x \in B(\bar{x}, R)$, the triangle inequality yields
\begin{equation*}
\norm{I - A DF(x)}_{\mathscr{B}(\mathcal{X}_{C,\mu})}
\le \norm{I - A DF(\bar{x})}_{\mathscr{B}(\mathcal{X}_{C,\mu})} + \norm{A(DF(x) - DF(\bar{x}))}_{\mathscr{B}(\mathcal{X}_{C,\mu})},
\end{equation*}
and the second term is bounded by $Z_1(R)$ following the same reasoning as in the proof of Proposition~\ref{prop:rpa_manifold}.
For the first term, since the discrete convolution product~\eqref{eq:cheb_convolution} makes $Df(\bar{u})$ a $3$-by$3$ block operator whose entries are banded operators with bandwidth $K$, we apply the operator norm decomposition at order $2K+1$:
\begin{multline*}
\norm{I - A DF(\bar{x})}_{\mathscr{B}(\mathcal{X}_{C,\mu})} = {} \\
\max\left( \norm{[I - A DF(\bar{x})] \Pi_{\le 2K+1}}_{\mathscr{B}(\mathcal{X}_{C,\mu})}, \norm{[I - A DF(\bar{x})] \Pi_{> 2K+1}}_{\mathscr{B}(\mathcal{X}_{C,\mu})} \right).
\end{multline*}
Using $A \Pi_{>2K+1} = \Pi_{>2K+1}$ together with $Df(\bar{u}) \Pi_{> 2K+1} = \Pi_{>K+1} Df(\bar{u}) \Pi_{> 2K+1}$,
\begin{equation*}
\big[ I - A DF(\bar{x}) \big] \Pi_{> 2K+1}
= \frac{\tau}{2}\, A\, \mathrm{Diag}(\mathcal{L}_C \Pi_{>K+1})\, Df(\bar{u})\, \Pi_{> 2K+1},
\end{equation*}
whose norm is at most
\begin{align*}
&\frac{|\tau|}{2} \norm{A \mathcal{L}_C \Pi_{>K+1} Df(\bar{u})}_{\mathscr{B}(X_{C,\mu}^3)} \\
&\le \frac{|\tau|}{2} \norm{A \mathcal{L}_C \Pi_{>K+1}}_{\mathscr{B}(X_{C,\mu}^3)} \norm{Df(\bar{u})}_{\mathscr{B}(X_{C,\mu}^3)} \\
&= \frac{|\tau|}{2} \norm{A \Pi_{\le 0} \mathcal{L}_C \Pi_{>K+1} + A \Pi_{> 0} \mathcal{L}_C \Pi_{>K+1}}_{\mathscr{B}(X_{C,\mu}^3)} \norm{Df(\bar{u})}_{\mathscr{B}(X_{C,\mu}^3)} \\
&\le \frac{|\tau|}{2} \left(\norm{A \Pi_{\le 0} \mathcal{L}_C \Pi_{>K+1}}_{\mathscr{B}(X_{C,\mu}^3)} + \norm{\Pi_{> K} \mathcal{L}_C \Pi_{>K+1}}_{\mathscr{B}(X_{C,\mu}^3)} \right) \norm{Df(\bar{u})}_{\mathscr{B}(X_{C,\mu}^3)},
\end{align*}
by submultiplicativity.
Since $\bar{u} \in \Pi_{\le K} X_{C,\mu}$, $f$ is quadratic and $\mathcal{L}_C$ has a lower diagonal, $DF(\bar{x}) \Pi_{\le 2K+1} = \Pi_{\le 3K+2} DF(\bar{x}) \Pi_{\le 2K+1}$, so the truncated part reduces to the finite matrix $\Pi_{\le 2K+1} - \Pi_{\le 3K+2} A \Pi_{\le 3K+2} DF(\bar{x}) \Pi_{\le 2K+1}$.
\end{proof}


\section{Conclusion}

We have given a computer-assisted proof of a transverse heteroclinic orbit in the Shimizu--Morioka system, following the strategy of \cite{LessardMirelesJamesReinhardt2014}.
The argument splits into the validation of the equilibria and eigenpairs (Section~\ref{sec:eq_eig}), of the local invariant manifolds (Section~\ref{sec:manifold}), and of the connecting orbit (Section~\ref{sec:heteroclinic}).
Each subproblem follows the same four-step template: a zero-finding map $F$, an approximate zero $\bar{x}$, an approximate inverse $A$, and bounds $Y$ and $Z$.
The Radii Polynomial Theorem~\ref{thm:rpa}, in each case, closes the argument.
Note that the estimates favor clarity over sharpness, but tighter and more efficient bounds would matter for problems where errors compound across the validations.

The analysis presented in this article can be readily adapted to other systems of autonomous ordinary differential equations.
More broadly, the maps $F$ in Sections~\ref{sec:manifold} and~\ref{sec:heteroclinic} are built around the form $x - \phi - \mathcal{L} f(x)$, with $\mathcal{L}$ compact.
This same structure appears in other zero-finding formulations of dynamical systems problems, such as initial-value problems and periodic orbits, for which analogues of Propositions~\ref{prop:rpa_manifold} and~\ref{prop:rpa_het} can be obtained by similar arguments.

On that note, the proofs of Propositions~\ref{prop:rpa_manifold} and~\ref{prop:rpa_het} exploit the quadratic nature of the Shimizu--Morioka vector field.
For higher-degree polynomial, and even non-polynomial, nonlinearities, the underlying argument still applies through the splitting
\begin{equation*}
\norm{I - A\, DF(x)}
\le \norm{I - A B} + \norm{A (B - DF(x))},
\end{equation*}
where $B = DF(\bar{x})$ is replaced by $B = I - \mathcal{L} W$, with $W \approx Df(\bar{x})$ an approximation of the multiplication operator (for instance, obtained via interpolation).
The $Z_0$ analysis is unchanged since $W$ retains a banded structure as a (possibly, block-wise) multiplication operator.
The principal new difficulty is the control of the term $\norm{W - Df(x)}$, which requires knowing how to control the nonlinearities in the relevant function space.


\section*{Acknowledgement}

O. H\'{e}not was supported by the National Science and Technology Council (NSTC) under grant No. 115-2115-M-002-001-MY2.
A. Takayasu was supported by the Japan Science and Technology Agency (JST) through the FOREST Program under grant No. JPMJFR246A, and JSPS KAKENHI under grant No. 24K00538 and No. 26K00619.

\bibliographystyle{plain}
\bibliography{references}

\end{document}